\def\NAT@def@citea{\def\@citea{\NAT@separator}}
\theoremstyle{plain}
\newtheorem{theorem}{Theorem}[section]
\newtheorem{proposition}{Proposition}[section]
\theoremstyle{definition}
\newtheorem{definition}{Definition}[section]
\newtheorem{example}{Example}[section]
\newtheorem{remark}{Remark}[section]
\DeclareMathOperator{\gph}{gph}
\DeclareMathOperator{\Sol}{Sol}
\DeclareMathOperator{\PVVI}{PVVI}
\DeclareMathOperator{\PVP}{PVP}
\DeclareMathOperator{\Po}{P}
\DeclareMathOperator{\VOP}{VOP}
\DeclareMathOperator{\VI}{VI}
\DeclareMathOperator{\Stat}{Stat}
\DeclareMathOperator{\ri}{ri}
\DeclareMathOperator{\R}{\mathbb{R}}
\DeclareMathOperator{\VVI}{VVI}
\DeclareMathOperator{\inte}{int}
\begin{document}
	\title{An Application of the Tarski-Seidenberg Theorem with Quantifiers to Vector Variational Inequalities}

\author{\name{Vu Trung Hieu\textsuperscript{a}\thanks{CONTACT Vu Trung Hieu. Email: hieuvut@gmail.com}}
	\affil{\textsuperscript{a}Division of Mathematics, Phuong Dong University,\\  171 Trung Kinh Street, Cau Giay, Hanoi, Vietnam}}

\maketitle

\begin{abstract}
We study the connectedness structure of the proper Pareto solution sets, the Pareto solution sets, the weak Pareto solution sets of  polynomial  vector variational inequalities, as well as the connectedness structure of the efficient solution sets and the weakly efficient solution sets of polynomial vector optimization problems. By using the Tarski-Seidenberg Theorem with quantifiers, we are able to prove that these solution sets are semi-algebraic without imposing the Mangasarian-Fromovitz constraint qualification on the system of constraints. 
\end{abstract}

\begin{keywords}
	Polynomial vector variational inequality; polynomial vector optimization; solution set; connectedness structure; semi-algebraic set
\end{keywords}

\section{Introduction}

Introduced by Franco Giannessi \cite{G80} in 1980, the concept of vector variational inequality has received a lot of attention from researchers. It is well known that vector variational inequality is an effective tool \cite{LKLY98} to investigate the solution existence, structure of the solution sets, and solution stability of vector optimization problems.

Several questions concerning to the connectedness structure of the solution sets of monotone affine vector variational inequalities raised in a paper of Yen and Yao \cite{YY2011}.
Recently, applying several results from real algebraic geometry \cite{BCF98} (see also \cite{Coste02} and \cite{HaPham2016}) on semi-algebraic sets and a scalarization method (see, e.g., \cite{LKLY98}), Huong, Yao, and Yen \cite{HYY2015b} have established several new results on the connectedness structure of the solution sets of polynomial vector variational inequalities whose constraint sets satisfy the Mangasarian-Fromovitz constraint qualification \cite{Manga_Fro_1967} at every feasible point. One of their theorems asserts that the solution sets have finitely many connected components. It is worthy to stress that the powerful approach used in \cite{HYY2015b} was first proposed and employed by the authors in \cite{HYY2015a}. The interested reader is referred to the survey paper \cite{Yen2016} for an elementary introduction to some new results on vector variational inequalities in connection with vector optimization problems. Observe that the notion of polynomial vector variational inequality was given in \cite{HYY2015b}. 

By using the Tarski-Seidenberg Theorem in the first-order formula language, called the Tarski-Seidenberg Theorem in the third form, we will directly prove that the Pareto solution sets, the weak Pareto solution sets, and the proper Pareto solution sets of a polynomial vector variational inequality are semi-algebraic without imposing the Mangasarian-Fromovitz constraint qualification on the system of constraints. Thus, the present paper develops an idea suggested by Kim, Pham, and Tuyen \cite{KimPhamTuyen2016}, and gives some refinements and extensions for the results of Huong, Yao, and Yen  \cite{HYY2015b}. 

In recent years, the usage of semi-algebraic multifunctions in optimization theory (see, e.g., \cite{DP2011} and \cite{DL2013}) has produced important results. Herein, we will prove that the basic multifunction associated to a polynomial vector variational inequality is semi-algebraic. 

Our paper can be considered as a new attempt to study the connectedness structure of the solution sets of polynomial vector variational inequalities. Some results concerning the connectedness structure of the solution sets and the stationary point sets of polynomial vector optimization problems are also obtained.

We are indebted to Prof. H.V. Ha for a valuable discussion leading to this investigation. Note that the Tarski-Seidenberg Theorem in the third form has been employed in \cite[Remark 3.2]{KimPhamTuyen2016} for improving some results of \cite{HYY2015b} on polynomial vector optimization problems.

The present paper has three sections. Section 2 is devoted to some definitions, notations, and auxiliary results on vector variational inequalities and semi-algebraic sets. The main results on the connectedness structure of the solution sets of polynomial vector variational inequalities and polynomial vector optimization problems are shown in Section 3.

\section{Preliminaries}

The scalar product of $x, y$ from $\R^n$ is denoted by $\langle x,y\rangle$. Let $K\subset\R^n$ be a nonempty subset and $F: K\to\R^n$ a vector-valued function. The \textit{variational inequality} defined by $F$ and $K$ is the problem
$${(\VI)} \quad {\text{Find}}\ \; x\in K \ \;  \text{such that}\ \; \langle F(x),y-x\rangle\geq 0,\  \forall y\in K.$$
The corresponding solution set is denoted by $\Sol(\VI)$.

Given $m$ vector-valued functions $F_l: K\to\R^n$ ($l=1,\dots, m$), we put $F=(F_1,\dots, F_m)$ and define
$$
F(x)(u)=( \langle F_1(x),u\rangle, \cdots, \langle F_m(x),u\rangle), \ \forall x\in K, \ u\in\R^n.
$$
Denoting the nonnegative orthant of  $\R^m$ by $C=\R^m_+$, we call the  problem
$${ (\VVI)} \quad {\text{Find}}\ \, x\in K \ \; \text{such that}\ \, F(x)(y-x)\nleq_{C\setminus\{0\}}0,\ \, \forall y\in K,$$ where the inequality means $-F(x)(y-x)\notin C\setminus\{0\}$, the \textit{vector variational inequality} defined by $F$ and $K$. The solution set of $(\VVI)$, called the \textit{Pareto solution set}, is abbreviated to $\Sol(\VVI)$. One associates to $(\VVI)$ the problem
$$(\VVI)^w \quad {\text{Find}}\ \, x\in K \ \; \text{such that}\ \, F(x)(y-x)\nleq_{\inte C}0, \ \, \forall y\in K,$$
where the inequality means $-F(x)(y-x)\notin \inte C$ and ${\inte}C$ stands for the interior of $C$. The solution set of $(\VVI)^w$, called the \textit{weak Pareto solution set}, is denoted by $\Sol^w(\VVI)$.

The relation $\inte C \subset C\setminus\{0\}$ implies $\Sol(\VVI)\subset\Sol^w(\VVI)$. Moreover, by the closedness of $\R^m\setminus(\inte C)$, if $K$ is closed, then $\Sol^w(\VVI)$ is closed.

Consider the standard simplex $$\Delta=\Big\{(\xi_1,\dots,\xi_m)\in \R^m_+:
\sum_{l=1}^m\xi_l=1\Big\},$$ whose relative interior is given by 
$$\ri\Delta=\{\xi\in\Delta: \xi_l>0, l=1,...,m\}.$$
For each $\xi\in\Delta$, consider the variational inequality 
$$(\VI)_{\xi} \quad  \text{Find }  x\in K \ \text{ such that }  \Big\langle  \sum_{l=1}^m\xi_lF_l(x),y-x\Big\rangle  \geq 0, \ \, \forall y\in
K,$$
and denote its solution set by $\Sol(\VI)_{\xi}$. If  $x\in {\Sol(\VI)}_\xi$ for some $\xi\in\ri\Delta$, then $x$ is said to be a {\it proper Pareto solution} of $(\VVI)$. The proper Pareto solution set of $(\VVI)$ is abbreviated to $\Sol^{pr}(\VVI)$.  By definition, one has 
\begin{equation}\label{pr}
\Sol^{pr}(\VVI)=\bigcup_{\xi\in{\ri}\Delta}{\Sol(\VI)}_\xi.
\end{equation}

The relationships between the solution sets of $(\VVI)$ are as follows.

\begin{theorem}\label{scalarization} {\rm (See \cite{LKLY98})} If $K$ is closed and convex, then
\begin{equation}\label{scalar}
\Sol^{pr}(\VVI)\subset {\Sol(\VVI)}\subset {\Sol^w}{(\VVI)}=\bigcup_{\xi\in\Delta}{\Sol(\VI)}_\xi.
\end{equation}
\end{theorem}

As in \cite{HoaPhuongYen2005}, we associate to $(\VVI)$ the \textit{basic multifunction} $\Phi:\Delta \rightrightarrows \R^n$ with
$\Phi(\xi):= \Sol(\VI)_{\xi}$. If $K$ is closed and convex, according to Theorem \ref{scalarization}, $\Sol^{pr}(\VVI)=\Phi(\ri\Delta)$ and $\Sol^{w}(\VVI)=\Phi(\Delta)$. So, the basic multifunction $\Phi$ is an effective tool to investigate the solution sets of $(\VVI)$.

To proceed furthermore,  we need to recall several results on semi-algebraic sets and functions. 
\begin{definition} {\rm (See \cite{Coste02})} {\rm A set in $\R^n$ is called  \textit{semi-algebraic} if it is the union of finitely
		many subsets of the form
		\begin{equation*}\label{basicsemi}
		\big\{x\in \R^n:f_1(x)=...=f_\ell(x)=0,g_{\ell+1}(x)<0,\dots,g_m(x)<0\big\},
		\end{equation*}
		where $\ell,m$ are natural numbers, and $f_1,\dots, f_\ell,\ g_{\ell+1},\dots,g_m$ belong to the ring $\R[x_1,\dots, x_n]$ of polynomials with real coefficients.} 
\end{definition}

The semi-algebraic property is preserved under the operations of taking finite union, intersection, sets minus, and topological closure within the family of semi-algebraic sets. Moreover, the Tarski-Seidenberg Theorem asserts that these sets are stable under linear projections. 

To present the Tarski-Seidenberg Theorem in the third form, which is the main tool of our subsequent investigations, we have to describe semi-algebraic sets via the language of first-order formulas \cite{Coste02}. 

A \textit{first-order formula} (with parameters in $\R$) is
obtained by the induction rules:
\begin{enumerate}
	\item[(i)] If $p \in\R[X_1, . . . , X_n]$, then $p > 0$ and $p= 0$ are first-order formulas;
	\item[(ii)] If $P,Q$ are first-order formulas, then ``$P$ \textit{and} $Q$'', ``$P$ \textit{or} $Q$'', ``\textit{not} $Q$'', which are denoted respectively by $P \wedge Q$, $P \vee Q$, and $\neg Q$, are first-order formulas;
	\item[(iii)] If $Q$ is a first-order formula, then $\exists X\, Q$ and $\forall X\, Q$, where $X$ is a variable ranging over $\R$, are first-order formulas.
\end{enumerate}

Formulas obtained by using only the rules (i) and (ii) are called \textit{quantifier-free
	formulas}. Clearly,  a subset $S \subset \R^n$ is semi-algebraic if and only if there is a quantifier-free formula $Q_S(X_1,...,X_n)$ such that
$$(x_1,...,x_n) \in S\ \; \text{if and only if }\; Q_S(x_1,..., x_n).$$
In this case, $Q_S(X_1,...,X_n)$ is said to be a \textit{quantifier-free formula defining $S$.}

\begin{theorem}\label{Tar_Sei3} {\rm (The Tarski-Seidenberg Theorem in the third form; see \cite{Coste02})} If $Q(X_1,...,X_n)$ is a first-order formula, then the set $$S=\big\lbrace (x_1,...,x_n)\in\R^n: \ Q(x_1,...,x_n) \big\rbrace $$ is a semi-algebraic set. 
\end{theorem}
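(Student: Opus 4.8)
The plan is to argue by structural induction on the way the first-order formula $Q$ is built up through the rules (i)--(iii), keeping track at each stage of the full (finite) list of free variables occurring in the current subformula. For a subformula with free variables $X_{i_1},\dots,X_{i_k}$, the induction hypothesis is that the subset of $\R^k$ it defines is semi-algebraic. Throughout I would use, without further comment, the stability of the family of semi-algebraic sets under finite union, intersection, set minus (hence complement), and---crucially---under coordinate permutations and the formation of cylinders $S\times\R$. The first group of operations is recorded in the text preceding Theorem~\ref{Tar_Sei2}, while the coordinate permutations and cylinders are immediate from the defining form \eqref{basicsemi}, since permuting coordinates merely relabels the variables of the polynomials and a cylinder is defined by the same polynomials read in one extra variable on which they do not depend.

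For the base of the induction I would observe that, by rule (i), the atomic formulas $p>0$ and $p=0$ with $p\in\R[X_1,\dots,X_n]$ define respectively the sets $\{x:p(x)>0\}$ and $\{x:p(x)=0\}$, each of which is exactly of the form \eqref{basicsemi} and hence semi-algebraic. For the inductive step corresponding to rule (ii), suppose $P$ and $Q$ already define semi-algebraic sets. After passing to the union of their free-variable lists and replacing each set by its cylinder over the missing coordinates, I may assume that $P$ and $Q$ define semi-algebraic subsets $S_P,S_Q$ of one and the same $\R^N$. Then $P\wedge Q$, $P\vee Q$, and $\neg Q$ define $S_P\cap S_Q$, $S_P\cup S_Q$, and $\R^N\setminus S_Q$, all semi-algebraic by the closure properties recalled above.

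The heart of the argument is rule (iii). Suppose $Q$ defines a semi-algebraic set $S\subset\R^{k+1}$ where, after a coordinate permutation, the quantified variable $X$ is the last coordinate. Then $\exists X\,Q$ defines precisely the projection $\pi(S)$ with $\pi:\R^{k+1}\to\R^k$ dropping the last coordinate, which is semi-algebraic by Theorem~\ref{Tar_Sei2}, the Tarski--Seidenberg Theorem in the second form. The universal quantifier is then reduced to this case through the logical equivalence $\forall X\,Q\equiv\neg\,\exists X\,(\neg Q)$: the inner $\neg Q$ is semi-algebraic by the rule-(ii) step, the existential quantifier by the projection argument, and the outer negation again by the rule-(ii) step. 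Since every first-order formula is obtained from atomic formulas by finitely many applications of (ii) and (iii), the induction terminates and the set $S$ in the statement is semi-algebraic.

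The single genuine obstacle is the existential step, and it is overcome exactly by invoking Theorem~\ref{Tar_Sei2}; the remaining difficulty is purely organizational---the bookkeeping needed to present the quantified variable as a coordinate to be projected away, and to align the free-variable lists of subformulas (via cylinders and coordinate permutations) before applying the Boolean closure properties. I would therefore concentrate the write-up on a clean formulation of the induction hypothesis that carries the free-variable list explicitly, so that each of rules (ii) and (iii) becomes a one-line reduction to an already-available closure property or to the second form.
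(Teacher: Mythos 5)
Your proposal is correct and follows essentially the canonical argument: the paper states this theorem without proof, citing Coste \cite{Coste02}, and the proof given there is precisely your structural induction --- atomic formulas are semi-algebraic by the defining form \eqref{basicsemi}, Boolean connectives are absorbed by the closure properties (with the cylinder/permutation bookkeeping you describe), the existential quantifier is the projection handled by Theorem \ref{Tar_Sei2}, and the universal quantifier reduces to it via $\forall X\, Q \equiv \neg\,\exists X\,(\neg Q)$. No gaps; your identification of the projection step as the sole genuine obstacle is exactly right.
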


By applying Theorem \ref{Tar_Sei3}, we now show that the solution set of a polynomial variational inequality is  semi-algebraic. The reasoning used here will be applied repeatedly later on. 

\begin{example} \rm If $F_l(x) \in\R[x_1, . . . , x_n], l=1,...,m,$ and $K$ is a semi-algebraic set, then $\Sol(\VI)$ is a semi-algebraic set. Indeed, the solution set is defined by 
	$$\Sol(\VI)=\big\{x\in\R^n:x\in K,\ \forall y\in K,  \langle F(x),y-x\rangle\geq 0\big\}.$$
	Let $Q_{\Sol}(X)$ be the first-order formula
	$$Q_K(X) \wedge \left[\forall Y \left( \left[ Q_K(Y)\wedge \langle F(X),Y-X\rangle\geq 0\right]\vee \left[ \neg Q_K(Y)\right]\right)  \right],$$
	where $Q_K(X)$ is a quantifier-free formula defining $K$. Clearly, $x\in \Sol(\VI)$ if and only if $Q_{\Sol}(x)$. So, $\Sol(\VI)$ is a semi-algebraic set by Theorem \ref{Tar_Sei3}.
\end{example}

A topological space $S$ is said to be \textit{connected} if it cannot be represented as $S=U\cup V$, where $U$ and $V$ are nonempty disjoint open sets of $S$. A nonempty subset $A\subset S$ is said to be a \textit{connected component} of  $S$ if $A$, equipped with the induced topology, is connected and it is not a proper subset of any connected subset of $S$. 
The topological space $S$ is said to be {\it path connected} if, for every $x, y$ in $S$, there is a continuous mapping $\gamma:[0,1]\to S$ such that $\gamma(0)=x$ and $\gamma(1)=y$. 
Any path connected topological space is connected, but the converse is not true. However, if $S\subset\R^n $ is a semi-algebraic set, then the two connectedness properties are equivalent \cite[Theorem 2.4.5  and Proposition 2.5.13]{BCF98}. It is well known that any semi-algebraic set has finitely many connected components \cite[Theorem 2.4.5]{BCF98}. 

\section{Main results}
\subsection{Polynomial vector variational inequalities}
\begin{definition} {\rm (See \cite{HYY2015b})} {\rm One says that $(\VVI)$ is a \textit{polynomial}  vector variational inequality and denotes it by $(\PVVI)$ if all the components of the operators 
		$F_{l}=(F_{l1}, . . . , F_{ln}), l=1,...,m,$ are polynomials and the constraint set $K$ is semi-algebraic.}
\end{definition}

Without assuming that $K$ is closed or $K$ satisfies a regularity condition, let us prove that the Pareto solution set and the weak Pareto solution set of $(\PVVI)$ are semi-algebraic. The following result is a significant improvement of the first and the third assertions of Theorem 3.3 in \cite{HYY2015b}. The proof is based on some new arguments.

\begin{theorem}\label{finite} Both sets $\Sol^w(\PVVI)$ and $\Sol(\PVVI)$  are semi-algebraic. So, each of them has finitely many connected components and every component is path connected.
\end{theorem}
\begin{proof} We first prove that $\Sol^w(\PVVI)$ is semi-algebraic. Recall that
	$$\Sol^w(\PVVI)=\big\{x\in K:-F(x)(y-x)\in \R^m\setminus (\inte C),\forall y\in K\big\}.$$
	As $\inte C=\{(\xi_1,\dots,\xi_m)\in \R^m:
	\xi_l> 0,l=1,...,m\}$,  one has $a\in\R^m\setminus (\inte C)$ if and only if $a$ belongs to the union of the closed half-spaces $$\left\{(\xi_1,...,\xi_m)\in\R^m:\xi_l\leq 0\right\}\quad (l=1,...,m)$$ of $\R^m$. Therefore,
	\begin{equation}\label{weak_sol_set} \Sol^w(\PVVI)=\Big\lbrace x\in K\;:\; \bigvee_{l=1}^{m} \big[F_l(x)(y-x)\geq 0\big], \;\forall y\in K\Big\rbrace.	
	\end{equation}
	For each $l\in\{1,...,m\}$, since all the components of $F_l(x)$ are polynomials in the variables $x_1,...,x_n$, the scalar product $\langle F_l(x),y-x\rangle$ is a polynomial in the variables $x_1,...,x_n,y_1,...,y_n$. Hence, the expressions $$\langle F_l(X),Y-X\rangle\geq 0\quad (l=1,...,m),$$ are quantifier-free formulas. In addition, since $K$ is a semi-algebraic set in $\R^n$, there exists a quantifier-free formula in $n$ variables $Q_K(X_1,...,X_n)$ defining $K.$ So, thanks to \eqref{weak_sol_set}, $x\in \Sol^w(\PVVI)$ if and only if $Q^w(x_1,...,x_n)$, where $Q^w(X_1,...,X_n)$ is the following first-order formula
	$$Q_K(X) \wedge\Big[\forall Y\Big(\Big[Q_K(Y) \wedge\Big(\bigvee_{l=1}^{m} \big[F_l(X)(Y-X)\geq 0\big]\Big)\Big] \vee \big[ \neg Q_K(Y)\big] \Big)\Big] .$$
	According to Theorem \ref{Tar_Sei3}, $\Sol^w(\PVVI)$ is a semi-algebraic set.
	
	Similarly, the Pareto solution set can be represented as follows:
	$$\Sol(\PVVI)=\big\{x\in K\;:\; -F(x)(y-x)\in (\R^m\setminus C)\cup \{0\},\; \forall y\in K\big\}.$$
	Since $C=\{(\xi_1,\dots,\xi_m)\in \R^m:
	\xi_l\geq 0, l=1,...,m\}$,  one has $a\in\R^m\setminus C$ if and only if $a$ belongs to the union of the open half-spaces $$\big\{(\xi_1,...,\xi_m)\in\R^m:\xi_l< 0\big\}\quad (l=1,...,m).$$ Hence, the set $\Sol(\PVVI)$ can be described as
	$$\Big\lbrace x\in K: \Big[ \bigvee_{l=1}^{m} \big(F_l(x)(y-x)> 0\big)\Big] \vee \Big[ \bigwedge_{l=1}^{m} \big(F_l(x)(y-x)= 0\big)\Big], \forall y\in K\Big\rbrace.$$
	So, we have $x\in \Sol(\PVVI)$ if and only if $Q(x_1,...,x_n)$, where $Q(X_1,...,X_n)$ is the following first-order formula in $n$ variables
	$$Q_K(X) \wedge\Big[ \forall Y\Big(\Big[Q_K(Y) \wedge \big[A(X,Y) \vee B(X,Y)\big]\Big]\vee \big[\neg Q_K(Y)\Big]\Big)\Big] $$
	with $$A(X,Y):=\bigvee_{l=1}^{m} \big[ F_l(X)(Y-X)> 0\big], \ B(X,Y):= \bigwedge_{l=1}^{m} \big[ F_l(X)(Y-X)= 0 \big].$$
	Therefore, $\Sol(\PVVI)$ is a semi-algebraic set by Theorem \ref{Tar_Sei3}.
\end{proof}
\begin{remark}\rm
	The convexity of $K$ has not been used in the preceding proof.
\end{remark}

The next statement refines and extends the second assertion of Theorem~3.3 in \cite{HYY2015b}. 

\begin{theorem}\label{finite_pr} The set $\Sol^{pr}(\PVVI)$ is semi-algebraic. So, it has finitely many connected components and each component is path connected.
\end{theorem}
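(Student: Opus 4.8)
The plan is to reproduce the scheme of the two preceding proofs, the only genuinely new feature being that the set in question is an \emph{infinite} union. Indeed, by the definition of a proper Pareto solution, $\Sol^{pr}(\PVVI)=\bigcup_{\xi\in\ri\Delta}\Sol(\VI)_\xi$, so that $x\in\Sol^{pr}(\PVVI)$ exactly when there is some $\xi\in\ri\Delta$ with $x\in\Sol(\VI)_\xi$. The crucial point is that this ``there is some $\xi$'' translates into an existential quantifier block, which Theorem \ref{Tar_Sei3} eliminates just as readily as the universal block $\forall Y$ used for $\Sol^w(\PVVI)$ and $\Sol(\PVVI)$; so, in addition to $\forall Y$, I would now also introduce $\exists\Xi_1\cdots\exists\Xi_m$.

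First I would record that $\ri\Delta$ is semi-algebraic, cut out by the quantifier-free formula
$$\Psi(\Xi_1,\dots,\Xi_m):=\left[\bigwedge_{l=1}^{m}\big(\Xi_l>0\big)\right]\wedge\left[\sum_{l=1}^{m}\Xi_l=1\right].$$
Next, since each $F_l$ has polynomial components, the function $G_\Xi(X,Y):=\sum_{l=1}^m\Xi_l\,F_l(X)(Y-X)=\big\langle\sum_{l=1}^m\Xi_lF_l(X),\,Y-X\big\rangle$ is a single polynomial in the joint variables $X_1,\dots,X_n,Y_1,\dots,Y_n,\Xi_1,\dots,\Xi_m$, so that $G_\Xi(X,Y)\geq 0$ is a quantifier-free formula. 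Writing $Q_K$ for a quantifier-free formula defining $K$, I would then assemble the first-order formula
$$Q^{pr}(X):=Q_K(X)\wedge\exists\Xi_1\cdots\exists\Xi_m\left[\Psi(\Xi)\wedge\forall Y\left(\big[Q_K(Y)\wedge(G_\Xi(X,Y)\geq 0)\big]\vee\big[\neg Q_K(Y)\big]\right)\right],$$
and verify that $x\in\Sol^{pr}(\PVVI)$ holds if and only if $Q^{pr}(x)$ holds.

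Applying Theorem \ref{Tar_Sei3} to $Q^{pr}$ then gives at once that $\Sol^{pr}(\PVVI)$ is semi-algebraic. The remaining two assertions are immediate from the general facts recalled in Section 2: every semi-algebraic set has finitely many connected components, and for a semi-algebraic set connectedness and path connectedness are equivalent, whence each component is path connected.

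The issue that must be handled correctly is conceptual rather than computational. Each single set $\Sol(\VI)_\xi$ is semi-algebraic by the reasoning already applied to $\Sol(\VI)$, but $\Sol^{pr}(\PVVI)$ is the union of the infinite family $\{\Sol(\VI)_\xi:\xi\in\ri\Delta\}$, and an arbitrary infinite union of semi-algebraic sets need not be semi-algebraic; so one cannot simply argue componentwise. What saves the argument is that the union is parametrized semi-algebraically, so it can be expressed by prefixing a single block $\exists\Xi_1\cdots\exists\Xi_m$ to a first-order formula, after which Theorem \ref{Tar_Sei3} applies. This also explains why the direct third-form route is cleaner than writing $\Sol^{pr}(\PVVI)=\Phi(\ri\Delta)$ and appealing to the projection statement (Theorem \ref{Tar_Sei2}): the quantifier formulation makes the elimination of both $\Xi$ and $Y$ transparent. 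Finally, exactly as in Theorem \ref{finite}, neither convexity nor closedness of $K$ nor any constraint qualification is needed, since here $\bigcup_{\xi\in\ri\Delta}\Sol(\VI)_\xi$ is the \emph{definition} of $\Sol^{pr}(\PVVI)$ and Theorem \ref{scalarization} plays no role.
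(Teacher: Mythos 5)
Your proof is correct and follows essentially the same route as the paper: both encode membership in $\Sol^{pr}(\PVVI)$ via a first-order formula combining an existential block over the weights in $\ri\Delta$ with a universal block over $Y\in K$, using that $\big\langle\sum_{l=1}^m\Theta_lF_l(X),Y-X\big\rangle\geq 0$ is quantifier-free, and then invoke Theorem \ref{Tar_Sei3}. If anything, your nesting $\exists\Xi\big[\Psi(\Xi)\wedge\forall Y(\cdots)\big]$ renders the quantifier scope more carefully than the paper's displayed formula, where $\exists\Theta\,Q_{\ri\Delta}(\Theta)$ is written as a separate conjunct even though $\Theta$ reappears in the $\forall Y$ clause.
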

\begin{proof} Since $\ri\Delta=\big\{\theta=(\theta_1,\dots,\theta_m)\in\Delta: \theta_l>0,l=1,...,m\big\}$ is a semi-algebraic set in $\R^m$, there exists a quantifier-free formula $Q_{\ri\Delta}(\Theta)$ in the variables $\Theta_1,...,\Theta_m$ defining $\ri\Delta.$
The set $\Sol^{pr}(\PVVI)$ defined by
 $$\Big\lbrace  x\in K:\exists\theta\in\ri\Delta, \Big\langle  \sum_{l=1}^m\theta_lF_l(x),y-x\Big\rangle  \geq 0, \forall y\in
	K \Big\rbrace .$$
	So, $x\in \Sol^{pr}(\PVVI)$ if and only if $Q^{pr}(x_1,...,x_n)$, where $Q^{pr}(X_1,...,X_n)$ is the following first-order formula
	$$Q_K(X) \wedge\Big[ \exists \Theta\ Q_{\ri\Delta}(\Theta)\Big]  \wedge \Big[ \forall Y \Big(\big[ Q_K(Y) \wedge C(\Theta,X,Y)\big]\vee \big[ \neg Q_K(Y)\big] \Big)\Big]$$
	with 
	\begin{equation}\label{C_expression}
	C(\Theta,X,Y):=\Big\langle  \sum_{l=1}^m\Theta_lF_l(X),Y-X\Big\rangle \geq 0.
	\end{equation}
	Hence, Theorem \ref{Tar_Sei3} allows us to conclude that the set $\Sol^{pr}(\PVVI)$ is semi-algebraic.
\end{proof}

Let $S\subset \R^m$ be a semi-algebraic set. Following \cite{DL2013}, we say that a multifunction $\Psi: S\rightrightarrows \R^n$ is \textit{semi-algebraic} if its graph
$$\gph \Psi=\big\{(x,y)\in \R^m\times \R^n: x\in S, y\in \Psi(x)\big\}$$
is a semi-algebraic set in  $\R^{m+n}$.
\begin{proposition}
The basic multifunction $\Phi:\Delta \rightrightarrows \R^n$ of the problem $(\PVVI)$ is semi-algebraic. 
\end{proposition}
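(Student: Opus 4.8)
The plan is to imitate the reasoning already used for Theorems \ref{finite} and \ref{finite_pr}, the only new feature being that the multiplier vector $\xi$ now plays the role of a \emph{free} variable rather than a quantified one. By the definition $\Phi(\xi)=\Sol(\VI)_\xi$, the graph of $\Phi$ is
$$\gph \Phi=\left\{(\xi,x)\in\R^m\times\R^n\;:\; \xi\in\Delta,\ x\in K,\ \left\langle\sum_{l=1}^m\xi_lF_l(x),y-x\right\rangle\geq 0,\ \forall y\in K\right\}.$$
So I would describe this set by a single first-order formula in the $m+n$ variables $(\Xi,X)=(\Xi_1,\dots,\Xi_m,X_1,\dots,X_n)$ and then conclude via Theorem \ref{Tar_Sei3} exactly as before.

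The crucial (and elementary) observation is that the defining inequality remains polynomial in the \emph{joint} variables. Indeed, each product $\Xi_l F_{lj}(X)$ is a polynomial in $(\Xi,X)$, so $\left\langle\sum_{l=1}^m\Xi_lF_l(X),Y-X\right\rangle$ is a polynomial in $(\Xi,X,Y)$; hence the expression
$$D(\Xi,X,Y):=\left\langle\sum_{l=1}^m\Xi_lF_l(X),Y-X\right\rangle\geq 0$$
is a quantifier-free formula. Since $\Delta$ is semi-algebraic in $\R^m$ and $K$ is semi-algebraic in $\R^n$, there are quantifier-free formulas $Q_\Delta(\Xi)$ and $Q_K(X)$ defining them. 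Then $(\xi,x)\in\gph\Phi$ holds if and only if the first-order formula
$$Q_\Delta(\Xi)\wedge Q_K(X)\wedge\Big[\forall Y\,\Big(\big[Q_K(Y)\wedge D(\Xi,X,Y)\big]\vee\big[\neg Q_K(Y)\big]\Big)\Big]$$
is satisfied at $(\Xi,X)=(\xi,x)$, which is the obvious analogue of the formula built around $C(\Theta,X,Y)$ in \eqref{C_expression}. Theorem \ref{Tar_Sei3} then yields at once that $\gph\Phi$ is a semi-algebraic set in $\R^{m+n}$, so $\Phi$ is a semi-algebraic multifunction by definition.

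The main point is that there is no genuine obstacle here: the statement is a near-verbatim adaptation of the earlier arguments, and the proof is essentially a bookkeeping exercise in the first-order language. The only thing worth emphasizing is the difference in status of the variable $\xi$. In the proof of Theorem \ref{finite_pr} the multiplier was bound by an existential quantifier (ranging over $\ri\Delta$), whereas here it must be kept free and coupled to $x$ inside the predicate $D(\Xi,X,Y)$; the fact that the bilinear coupling $\sum_l\Xi_l F_l(X)$ stays polynomial in $(\Xi,X)$ is exactly what keeps this core predicate quantifier-free and lets the whole construction go through.
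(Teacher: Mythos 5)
Your proof is correct and matches the paper's own argument essentially verbatim: the paper likewise defines the first-order formula $Q_K(X)\wedge Q_{\Delta}(\Theta)\wedge\big[\forall Y\,\big(\big[Q_K(Y)\wedge C(\Theta,X,Y)\big]\vee\big[\neg Q_K(Y)\big]\big)\big]$ with $C$ as in \eqref{C_expression} (your $D$), treating $\xi$ as a free variable, and concludes via Theorem \ref{Tar_Sei3}. Your explicit remark that $\sum_l \Xi_l F_l(X)$ remains polynomial in the joint variables, keeping the core predicate quantifier-free, is a point the paper leaves implicit but is exactly the right justification.
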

\begin{proof} Since the standard simplex  $\Delta$ is a semi-algebraic set in $\R^m$, there is a quantifier-free formula $Q_{\Delta}(\Theta)$ in variables $\Theta_1,...,\Theta_m$ defining $\Delta.$	We see that
	$$\gph\Phi=\Big\lbrace  (\theta,x)\in \Delta\times K:\Big\langle  \sum_{l=1}^m\theta_lF_l(x),y-x\Big\rangle  \geq 0,\forall y\in
	K \Big\rbrace.$$
	Consider the first-order formula $Q_{\Phi}(\Theta,X)$ defined by	
	$$Q_K(X) \wedge Q_{\Delta}(\Theta) \wedge \Big[ \forall Y \Big(  \big[ Q_K(Y) \wedge C(\Theta,X,Y) \big]\vee \big[ \neg Q_K(Y)\big] \Big)\Big] ,$$
	where $C(\Theta,X,Y)$ has been defined in \eqref{C_expression}.	Clearly, $(\theta,x)\in \gph\Phi$ if and only if $Q_{\Phi}(\theta,x)$. According to Theorem \ref{Tar_Sei3}, $\gph\Phi$ is a semi-algebraic set. This means that the multifunction $\Phi$ is semi-algebraic.
\end{proof}

\begin{example}
	Consider the bicriteria polynomial variational inequality $(\Po)$ in $\R^2$, where 
	$$F_1(x)=\begin{bmatrix}0 & 1 \\ 
	-1 &  0
	\end{bmatrix}\begin{bmatrix}x_1 \\ 
	x_2
	\end{bmatrix}+\begin{bmatrix}
	0\\ 
	-1
	\end{bmatrix},F_2(x)=\begin{bmatrix}0& -1 \\ 
	1 & 0
	\end{bmatrix}\begin{bmatrix}x_1 \\ 
	x_2
	\end{bmatrix}+\begin{bmatrix}
	0\\ 
	-1
	\end{bmatrix},$$ 
	and 
	$$K=\{(x_1,x_2)\in\R^2:g(x_1,x_2)=x_1^2-x_2-4\leq 0\}.$$
	Clearly, $K$ satisfies all the assumptions of \cite[Proposition 1.3.4]{FaPa03} and the Abadie constraint qualification{\rm(ACQ)} at each point of $K$. So, we have
	$x\in\Sol(\VI)_{\xi}$ if and only if there exist $\lambda\in \R_+$ such that 
	$$\xi_1F_1(x)+(1-\xi_1)F_2(x)+\lambda\nabla g(x)=0,\ \lambda g(x)=0,\ g(x) \leq 0.$$
	The first equality means that
	\begin{equation}\label{equation_matrix}
	\begin{bmatrix}0 & 2\xi_1- 1 \\ 
	1-2\xi_1 &  0
	\end{bmatrix}\begin{bmatrix}x_1 \\ 
	x_2
	\end{bmatrix}+\begin{bmatrix}
	0\\ 
	-1
	\end{bmatrix}+\begin{bmatrix}
	2\lambda x_1\\ 
	-\lambda
	\end{bmatrix}=\begin{bmatrix}
	0\\ 
	0
	\end{bmatrix}.
	\end{equation}
	Consider the following two cases.
	\begin{enumerate}
		\item[(a)] $x\in \inte K$. Since $g(x)<0$ and $\lambda g(x)=0$, we have $\lambda=0$. Hence, \eqref{equation_matrix} becomes
		$$\begin{bmatrix}0& 2\xi_1- 1 \\ 
		1-2\xi_1& 0
		\end{bmatrix}\begin{bmatrix}x_1 \\ 
		x_2
		\end{bmatrix}=\begin{bmatrix}
		0\\ 
		1
		\end{bmatrix}.$$
		Combining these facts, we obtain
		$$\inte K\cap\Sol^w(\Po)=\Big\lbrace \Big( \frac{1}{1-2\xi_1},0\Big): \xi_1\in \big[0,\frac{1}{4}\big)\cup \big( \frac{3}{4},1\big]  \Big\rbrace.$$
		\item[(b)] $x\in \partial K:= K\setminus\inte K$. We have $g(x)=0$ and $\lambda \geq0$. Solving \eqref{equation_matrix} we obtain
		$$x=\left(\frac{1+\lambda}{1-2\xi_1},\frac{2\lambda^2+2\lambda}{(1-2\xi_1)^2}\right).$$
		Combining these facts, $\partial K\cap\Sol^w(\Po)$ defined by
		$$\left\lbrace\left(\frac{1+\lambda}{1-2\xi_1},\frac{2\lambda^2+2\lambda}{(1-2\xi_1)^2}\right): \xi_1\in \big[\frac{1}{4},\frac{1}{2}\big)\cup \big(\frac{1}{2},\frac{3}{4}\big]  \right\rbrace,$$
		where $\lambda=\sqrt{1-4(1-2\xi_1)^2}$.
	\end{enumerate}
	\begin{center}
		\begin{tikzpicture}
		\begin{axis}[
		xlabel = $x_1$,
		ylabel = $x_2$,]
		\addplot [thin, dashed, domain=-3:3, 
		samples=100, 
		color=gray,]{x^2 - 4};
		\addplot [very thick, domain=-3:-2, 
		samples=100, 
		color=gray,]{x^2 - 4};
		\addplot [very thick, domain=2:3, 
		samples=100, 
		color=gray,]{x^2 - 4};
		\addplot [very thick, domain=1:2, 
		samples=100, 
		color=gray,]{0};
		\addplot [very thick, domain=-2:-1, 
		samples=100, 
		color=gray,]{0};
		\addlegendentry{$x_1^2 -x_2=4$}
		\addlegendentry{$\Sol^w(\Po)$}
		\end{axis}
		\end{tikzpicture}
		
		{\textbf{Figure 1.}\ \, The weak Pareto solution set $\Sol^w(\Po)$.}
	\end{center}
	So, the basic multifunction is given by
	$$\Phi(\xi_1,1-\xi_1)=\left\{\begin{array}{cl}
	\left( \frac{1}{1-2\xi_1},0\right) & \quad \hbox{ if } \ \ \xi_1\in \Big[0,\frac{1}{4}\Big)\cup \Big(\frac{3}{4},1\Big],  \\
	\emptyset & \quad \hbox{ if } \ \ \xi_1=\frac{1}{2}, \\
	\left( \frac{1+\lambda}{1-2\xi_1},\frac{2\lambda^2+2\lambda}{(1-2\xi_1)^2}\right) &  \quad \hbox{ if } \ \ \xi_1\in \Big[\frac{1}{4},\frac{1}{2}\Big)\cup \Big(\frac{1}{2},\frac{3}{4}\Big].  \\
	\end{array}\right.$$
	According to Theorems \ref{finite} and \ref{finite_pr}, both solution sets $\Sol^w(\Po)$
	and $\Sol^{{pr}}(\Po)$ are semi-algebraic. Moreover, each
	solution set has two connected components and each component is path connected and unbounded.
\end{example}

\subsection{Polynomial vector optimization problems}
In this section, we will study polynomial 
vector optimization problems and obtain some topological properties of the solution sets and the stationary point sets.
First, let us specify several solution concepts from \cite{Jahn2011} to polynomial vector
optimization problems.

Let there be given a nonempty closed semi-algebraic subset  $K\subset\R^n$ and polynomial functions $f_1,\dots,f_m:\R^n\to\R$. The vector minimization problem with the constraint set $K$ and the vector objective function $f:=(f_1,\dots,f_m)$ is written formally as follows:
\begin{equation*}\label{VOP} (\PVP)\quad {\rm Minimize}\ \; f(x)\ \ {\rm subject \ to}\ \  x\in K.
\end{equation*}
A point $x\in K$ is said to be  a \textit{Pareto solution} of $(\PVP)$ if  $f(y)-f(x)$ does not belong to $-\R^m_+\setminus\{0\}$
for all $y\in K$. It
is said to be  a {\it weak Pareto solution} of $(\PVP)$ if  $f(y)-f(x)$ does not belong to $-\inte\R^m_+$ for all $y\in K$. The Pareto solution set and the weak Pareto solution set of
$(\PVP)$ are respectively abbreviated to $\Sol(\PVP)$ and $\Sol^w(\PVP)$.

We see that $a\notin -\R^m_+\setminus\{0\}$ if and only if $a$ belongs to the union of the open half-spaces $\big\{(\xi_1,...,\xi_m)\in\R^m:\xi_l>0\big\}$ $(l=1,...,m)$ of $\R^m$. Similarly, $a\notin -{\rm int}\R^m_+$ if and only if $a$ belongs to the union of the closed half-spaces $\big\{(\xi_1,...,\xi_m)\in\R^m:\xi_l\geq 0\big\}$ $ (l=1,...,m)$ of $\R^m$.

\begin{theorem}
	Both sets $\Sol^w(\PVP)$ and $\Sol(\PVP)$  are semi-algebraic. So, each of them has finitely many connected components and every component is path connected.
\end{theorem}
\begin{proof} The weak Pareto solution set $\Sol^w(\PVP)$ can be represented as 
	$$\Sol^w(\PVP)=\Big\lbrace x\in K: \bigvee_{l=1}^{m}[f_l(y)-f_l(x)\geq 0],\forall y\in K\Big\rbrace.$$
	For every $l=1,\dots,m$, since $f_l(x)$ is a polynomial in the variables $x_1,...,x_n$, $f_l(y)-f_l(x)$ is a polynomial in the variables $x_1,...,x_n,y_1,...,y_n$. So the expression $f_l(Y)-f_l(X)\geq 0$ is a quantifier-free formula.  Since $K$ is a semi-algebraic set in $\R^n$, there exists a quantifier-free formula in $n$ variables $Q_K(X_1,...,X_n)$ defining $K.$ So, $x$ belongs to $\Sol^w(\PVP)$ if and only if $Q^w(x_1,...,x_n)$, where $Q^w(X_1,...,X_n)$ is the following first-order formula in $n$ variables
	$$Q_K(X) \wedge\Big[  \forall Y \Big( \Big[  Q_K(Y) \wedge\Big(\bigvee_{l=1}^{m} [f_l(Y)-f_l(X)\geq 0]\Big)\Big]\vee \big[ \neg Q_K(Y)\big]  \Big) \Big]  .$$
	Therefore, by Theorem \ref{Tar_Sei3}, $\Sol^w(\PVVI)$ is a semi-algebraic set.
	
	Analogously, the Pareto solution set  $\Sol(\PVP)$ is described as
	$$\Big\lbrace x\in K: \Big[ \bigvee_{l=1}^{m} \big(f_l(y)-f_l(x)>0\big)\Big] \vee \Big[ \bigwedge_{l=1}^{m} \big(f_l(y)-f_l(x)= 0\big)\Big],\ \forall y\in K\Big\rbrace.$$
	Hence, $x\in \Sol(\PVP)$ if and only if $Q(x_1,...,x_n)$, where $Q(X_1,...,X_n)$ is the following first-order formula in $n$ variables
	$$Q_K(X)  \wedge \Big(  \forall Y \Big[\Big( Q_K(Y) \wedge \left[ A(X,Y) \vee B(X,Y)\right]\Big)\vee \Big( \neg Q_K(Y)\Big) \Big] \Big) .$$
	with $$A(X,Y):=\bigvee_{l=1}^{m} \big[ f_l(Y)-f_l(X)>0\big],$$ and $$B(X,Y):= \bigwedge_{l=1}^{m} \big[ f_l(Y)-f_l(X)=0\big].$$
	According to Theorem \ref{Tar_Sei3}, $\Sol(\PVP)$ is a semi-algebraic set.
\end{proof}

\begin{remark}
	It was shown in \cite[Remark 3.2]{KimPhamTuyen2016} that the set of all Pareto values (resp., weak Pareto values) and the set of all Pareto solutions (resp., weak Pareto solutions)  of  an unconstrained polynomial vector optimization problem are semi-algebraic. 
\end{remark}

Put $F_l(x)=\nabla f_l(x)$, with $\nabla f_l(x)$ denoting the gradient of $f_l$ at $x$. For any $\xi\in\Delta$, we consider the parametric variational inequality ${\rm (VI)}_\xi$, which now becomes
$$(\VI)_\xi \quad {\text{Find}}\ \, x\in K \  \text{such that}\  \Big\langle\sum_{l=1}^m\xi_l\nabla f_l( x),y- x\Big\rangle\geq 0,\  \forall y\in K.$$
Assume that $K$ is closed convex. According to \cite[Theorem 3.1(i)]{LKLY98}, if $x$ is a weak Pareto solution of $(\PVP)$, then there is $\xi\in\Delta$ with $x\in {\rm Sol(VI)}_\xi$. 

If $x\in K$ and there exists $\xi\in\Delta$ such that $x\in {\Sol(\VI)}_\xi$, then $x$ is said to be a {\it stationary point} of $(\PVP)$. If $x\in K$ and there is $\xi\in{\rm ri}\Delta$ with $x\in {\Sol(\VI)}_\xi$, then we call $x$ a {\it proper stationary point} of $(\PVP)$. The stationary point set and the proper stationary point set of $(\PVP)$ are respectively abbreviated to $\Stat(\PVP)$ and $\Pr(\PVP)$. From these definitions it follows that
\begin{equation}\label{Pr_stat}
\bigcup_{\xi\in {\ri}\Delta}{\Sol(\VI)}_\xi={\Pr(\PVP)}\subset {\Stat(\PVP)}=\bigcup_{\xi\in\Delta}\Sol(\VI)_\xi.
\end{equation}

\begin{remark}
	Assume that $K$ is convex and all the functions $f_i$ are convex. If there is $\xi\in\Delta$ such that $x\in {\Sol(\VI)}_\xi$, then $x$ is a weak Pareto solution of $(\PVP)$; see \cite[Theorem 3.1(ii)]{LKLY98}. If there is $\xi\in\ri\Delta$ such that $x\in {\Sol(\VI)}_\xi$, then $x$ is a Pareto solution of $(\PVP)$; see \cite[Theorem 3.1(iii)]{LKLY98}. So, we have
	\begin{equation}\label{relation1}
	\bigcup_{\xi\in {\ri}\Delta}{\Sol(\VI)}_\xi\subset{\Sol(\PVP)}\subset \Sol^w(\PVP)=\bigcup_{\xi\in\Delta}{\Sol(\VI)}_\xi.
	\end{equation}
\end{remark}

\begin{proposition}\label{Stat} 
	Both sets $\Stat(\PVP)$ and $\Pr(\PVP)$  are semi-algebraic. So, each of them has finitely many connected components and every component is path connected.
\end{proposition}

\begin{proof} Consider the problem $(\PVVI)$, where $F_l(x)=\nabla f_l(x)$ for $l=1,\dots,m$. 	
	From the equality in \eqref{scalar} and the last equality in \eqref{Pr_stat}, one has  $\Stat(\PVP)=\Sol^w(\PVVI)$. Since Theorem \ref{finite} asserts that $\Sol^w(\PVVI)$ is  semi-algebraic, so is the set $\Stat(\PVP)$.
By \eqref{pr} and the first equality in \eqref{Pr_stat}, the sets $\Pr(\PVP)$ and $\Sol^{pr}(\PVVI)$ coincide. As $\Sol^{pr}(\PVVI)$ is semi-algebraic by Theorem \ref{finite_pr}, so is the set $\Pr(\PVP)$.
\end{proof}
\begin{example}
	Consider the bicriteria optimization problem $(\VOP)$ in $\R^2$, with the constraint $K=\{x\in\R^2:-x_1\leq 0\}$ and two polynomial functions given by $$f_1(x)=\frac{1}{4}x_1^4-x_2,\; f_2(x)=\frac{1}{3}x_2^3-x_1.$$
 It is easy to check that $f_1$ is convex over $\R^2$ and $f_2$ is convex over the closed-half space $K$. So, both of them are convex on $K$. To find the Pareto solution set and the weak Pareto solution set of $(\VOP)$, we will solve the bicriteria variational inequality $(\VVI)$ derived from the optimization problem. 
	The gradients of $f_1,f_2$ at $x$ are given by
	$$\nabla f_1(x)=F_1(x)=\begin{bmatrix}
	x_1^3\\ 
	-1
	\end{bmatrix},\ \nabla f_2(x)= F_2(x)=\begin{bmatrix}
	-1\\ 
	x_2^2
	\end{bmatrix}.$$ 
	Since {\rm(ACQ)} is satisfied at each point of $K$, by \cite[Proposition 1.3.4]{FaPa03}  we know that 
	$x\in\Sol(\VI)_{\xi}$ if and only if there exist $\lambda\in \R_+$ such that the following equation and the inequality are satisfied:
	$$\xi_1F_1(x)+(1-\xi_1)F_2(x)+\lambda\nabla g(x)=0,\ \lambda g(x)=0,\ g(x) \leq 0.$$
	The first equation can be rewritten as
	\begin{equation}\label{equation_matrix_2}
	\begin{bmatrix}
	\xi_1 x_1^3-(1-\xi_1)\\ 
	(1-\xi_1)x_2^2-\xi_1
	\end{bmatrix}+\begin{bmatrix}
	-\lambda\\ 
	0
	\end{bmatrix}=\begin{bmatrix}
	0\\ 
	0
	\end{bmatrix}.
	\end{equation}
	Consider the following two cases:
	\begin{enumerate}
		\item[(i)] $x\in \inte K$. Then we have $g(x)<0$ and $\lambda=0$. Therefore, 
		$$\inte K\cap\Sol^w(\VVI)=\Big\lbrace \Big(\sqrt[3]{\frac{1-\xi_1}{\xi_1}},\sqrt{\frac{\xi_1}{1-\xi_1}}\Big) :\xi_1\in \big(0,1\big)  \Big\rbrace.$$
		\item[(ii)] $x\in \partial K=\big\{x\in\R^2: -x_1=0\big\}$. Then we have $g(x)=0$ and $\lambda\geq 0$. System \eqref{equation_matrix_2} leads to
		$\partial K\cap\Sol^w(\VVI)=\emptyset.$
	\end{enumerate}
Hence, the basic multifunction of $(\VVI)$ is given by
	$$\Phi(\xi_1,1-\xi_1)=\left\{\begin{array}{cl}
	\left(\sqrt[3]{\frac{1-\xi_1}{\xi_1}},\sqrt{\frac{\xi_1}{1-\xi_1}}\right)  & \ \ \hbox{ if } \ \xi_1\in \Big(0,1\Big),  \\
	\emptyset & \ \ \hbox{ if } \ \xi_1\in\{0,1\} . \\
	\end{array}\right.$$
\begin{center}
		\begin{tikzpicture}
		\begin{axis}[axis lines = center,
		xlabel = $x_1$,
		ylabel = $x_2$,]
		\addplot [very thick, domain=0:5, 
		samples=100, 
		color=gray,]{1/sqrt(x^3)};
		\addlegendentry{$\Sol(\VOP)=\Sol^w(\VOP)$}
		\end{axis}
		\end{tikzpicture}
		
		{\textbf{Figure 2.}\ \, The Pareto solution set $\Sol(\VOP)$.}
	\end{center}
	Thus, 	$\Sol^{pr}(\VVI)=\Sol^{w}(\VVI)$. According to \eqref{relation1}, the Pareto solution set and the weak Pareto solution set of $(\VOP)$ coincide. Here we have $$\Sol(\VOP)=\Sol^w(\VOP)=\Big\lbrace (x_1,x_2)\in \R^2: x_2^2=\frac{1}{x_1^3}, x_1>0\Big\rbrace .$$ 
This is an unbounded, connected semi-algebraic set. 
\end{example}

\section*{Acknowledgements}
	The author is indebted to Professor Nguyen Dong Yen for many stimulating conversations and Professor Ha Huy Vui for very helpful comments concerning the Tarski-Seidenberg Theorem in the third form.  The author wishes to thank the referee for a remark leading to Proposition~\ref{Stat}.





\end{document}